\newtheorem{Theorem}{Theorem}[section]
\newtheorem{Corollary}[Theorem]{Corollary}
\newtheorem{Lemma}[Theorem]{Lemma}
\newtheorem{Remark}[Theorem]{Remark}
\theoremstyle{definition}
\newtheorem{example}[Theorem]{Example}
\def \qed{\hfill{\hbox{$\square$}}}
\numberwithin{equation}{section}
\begin{document}
\title{Ricci Solitons on Pseudo--Riemannian Hypersurfaces of 4--dimensional Minkowski space}

\author[B. Bekta\c s]{Burcu Bekta\c s Demirci}
\address{Fatih Sultan Vak{\i}f University, Hal\.{I}\c{c} Campus, Faculty of Engineering,
Department of Civil Engineering, 34445, Beyo\u{g}lu, İstanbul}
\email{bbektas@fsm.edu.tr}

\date{}

\maketitle
\begin{abstract}
In this article, we get classification theorems for a Ricci soliton on the pseudo--Riemannian hypersurface 
of the Minkowski space 
$\mathbb{E}^4_1$ taking the potential vector field as the tangent component of the position vector of the pseudo--Riemannian hypersurface, denoted by $(M,g,{\bf x}^T,\lambda)$ in both Riemannian and Lorentzian settings.
First, we obtain the necessary and sufficient condition that a pseudo--Riemannian hypersurface $(M,g)$ in $\mathbb{E}^4_1$
admits a Ricci soliton $(M,g,{\bf x}^T,\lambda)$. 
In each of the form of the shape operator of a pseudo--Riemannian hypersurface, we obtain characterization
a Ricci soliton on a pseudo--Riemannian hypersurface. 
More precisely, we show that totally umbilical hypersurfaces, hyperbolic and a pseudo--spherical cylinder in $\mathbb{E}^4_1$
is a shrinking Ricci soliton whose the potential vector field is the tangent part of the position vector. 
Furthermore, we conclude that there exists only a shrinking Ricci soliton on a Lorentzian isoparametric hypersurface in $\mathbb{E}^4_1$ with nondiagonalizable shape operator whose the minimal polynomial has double real roots.   
\end{abstract}

\section{Introduction}
In the late of twentieth century, the concept of Ricci soliton was introduced by Hamilton 
to prove the Poincare Conjecture which is related to classify all compact three dimensional 
manifolds, \cite{Hamilton}.
The notion of Ricci soliton 
is related to self similar solutions of the Ricci flow, that is a partial differential equation
$\frac{\partial g(t)}{\partial t}=-2\mbox{Ric}(g(t))$ and they
often arise as limits of dilations of singularities in the Ricci flow. For more details, see \cite{Cao1, Cao2}. 
The topic of Ricci soliton has become more popular and important between mathematicians 
after G. Perelman used the notion of Ricci soliton to solve the Poincare conjecture in \cite{Perelman}. 

We recall the definition of a Ricci soliton on a pseudo--Riemannian submanifold as follows:\\
A smooth vector field $\xi$ on a pseudo--Riemannian manifold $(M,g)$ is said to define a Ricci soliton
if it satisfies
\begin{equation}
\label{riccisol}
\frac{1}{2}\mathcal{L}_\xi g+\mbox{Ric}=\lambda g
\end{equation}
where $\xi$ is a potential field, $\mathcal{L}_\xi g$ is the Lie deriative of the metric tensor $g$ with respect to $\xi$,
$\mbox{Ric}$ is the Ricci tensor of $(M,g)$ and $\lambda$ is a constant. 
In this context, the Ricci soliton can be considered as a natural generalization of the Einstein metric.
Throughout this work, a Ricci soliton on the pseudo--Riemannian manifold $(M,g)$ 
with a potential vector field $\xi$ is denoted by $(M,g,\xi,\lambda)$.
A Ricci soliton $(M,g,\xi,\lambda)$ is called shrinking, steady or expanding if
$\lambda>0$, $\lambda=0$ or $\lambda<0$, respectively.
A trivial Ricci soliton is one for which $\xi$ is zero or Killing, i.e., $\mathcal{L}_{\xi}g=0$ 
in which case the metric becomes Einstein. 
A Ricci soliton $(M,g,\xi, \lambda)$ is called a gradient Ricci soliton 
if its potential vector field $\xi$ is the gradient some smooth function $f$ on $M$,
that is, $\nabla f=\xi$.

Many mathematicians have obtained important results about the geometry of Ricci soliton from the different 
point of view such as \cite{ALG, FG, CK, BCGG, Chen2, Chen5, Chen6}.

The position vector of a pseudo--Riemannian submanifold in the pseudo--Euclidean space
is the most elementary and geometric object. There are some research fields in differential
geometry related to the position vector, see \cite{Chen3, Chen4}. 
A. Fialkow introduced in \cite{Fialkow} the notion of concircular vector fields $v$ on a Riemannian 
manifold $M$ as vector fields which satisfy 
\begin{equation}
\tilde{\nabla}_X v=\mu X,\;\;X\in TM
\end{equation}
where $\tilde{\nabla}$ denotes the Levi--Civita connection of $M$, 
$TM$ is the tangent bundle of $M$ 
and $\mu$ a nontrivial function on $M$. 
As a particular case for the function $\mu=1$, a concircular vector 
field $v$ is called a concurrent vector field. The notion of 
concircular vector fields can be extended naturally to concircular vector fields
in pseudo--Riemannian manifolds.  
Since $\tilde{\nabla}_X{\bf x}=X$ on a pseudo--Riemannian manifold $M$,
the position vector ${\bf x}$ of $M$ is the best known example as concurrent vector field. 

A. Barros et. al. studied the immersions of a Ricci soliton into a Riemannian manifold 
and they proved that a shrinking Ricci soliton immersed into a space form with
constant mean curvature is a Gaussian soliton in \cite{BGR}.
In \cite{Chen1} and \cite{Chen7}, B.Y.Chen and S. Deskmukh got the equation 
for the Ricci tensor of a submanifold in a Riemannian manifold to admit a Ricci soliton 
whose the potential vector field is the tangential part of concurrent vector field
and they also completely classfied Ricci solitons on Euclidean hypersurface whose 
the potential vector field arisen from the position vector field of Euclidean hypersurface.
Also, H. Al--Sodias et. al. obtained necessary and sufficient condition for a hypersurface in Euclidean space 
to be a gradient Ricci soliton in \cite{AAD}. 
Moreover, \c{S}. E. Meri\c{c} and E. K{\i}l{\i}\c{c} considered under which condition a submanifold of a Ricci soliton is also 
a Ricci soliton and they gave the relation between intrinsic and extrinsic invariants of a Riemannian submanifold
which admits a Ricci soliton in \cite{MK}. 

In this work, we study on a pseudo--Riemannian hypersurface 
$M$ in a 4--dimensional Minkowski space $\mathbb{E}^4_1$ whose the potential 
vector field is the tangential component of the position vector field
of a pseudo--Riemannian hypersurface $M$ in $\mathbb{E}^4_1$. 
First, we find the equation which the Ricci tensor of the pseudo--Rieamannian
hypersurfaces $M$ satisfies to be Ricci soliton. 
From the result in \cite{Magid}, we also know that the shape operator of a pseudo--Riemannian 
hypersurface $M$ in $\mathbb{E}^4_1$ 
can be put into the four different forms, not only diagonalizable one as Riemannian case. 
In this context, we classify the pseudo--Riemannian hypersurfaces in $\mathbb{E}^4_1$ with a diagonalizable shape
operator and constant mean curvature which admits a Ricci soliton $(M,g,{\bf x}^T,\lambda)$. 
Then, we study on a Lorentzian hypersurface $M$ in $\mathbb{E}^4_1$ whose the shape operator is not
diagonalizable. We show that there does not exist a Ricci soliton on a Lorentzian hypersurface
in $\mathbb{E}^4_1$ whose the minimal polynomial of the shape operator has complex principal curvature 
or real principal curvatures with multiplicity three. 
Finally, we obtain that a generalized umbilical Lorentzian hypersurface in $\mathbb{E}^4_1$
admits a shrinking Ricci soliton $(M, g, {\bf x}^T, \lambda)$. Thus, we get classification 
results for a Ricci soliton on an isoparametric hypersurface of $\mathbb{E}^4_1$.

\section{Preliminaries}
Let $\mathbb{E}^4_1$ be the $4$--dimensional real vector space 
$\mathbb{R}^4$ with a canonical pseudo--Euclidean metric tensor $\tilde{g}$ of index $1$ 
given by
\begin{equation}
\label{metric}
\tilde{g}=-dx_1^2+dx_2^2+dx_3^2+dx_4^2
\end{equation}
where $(x_1, x_2, x_3, x_4)$  is a rectangular coordinate system in $\mathbb{R}^4$.
Then, $\mathbb{E}^4_1=(\mathbb{R}^4,\tilde{g})$ is called a 4--dimensional Minkowski space. 

The pseudo--Riemannian space forms in $\mathbb{E}^4_1$ are defined by 
\begin{align}
\mathbb{S}^n_t(c^2)&=\{{\bf x}\in\mathbb{E}^4_1\;:\;\tilde{g}({\bf x},{\bf x})=c^{-2}\}\\
\mathbb{H}^n(-c^2)&=\{{\bf x}\in\mathbb{E}^4_1\;:\;\tilde{g}({\bf x},{\bf x})=-c^{-2}\}.
\end{align}
These spaces are complete and of constant curvature $c^2$ and $-c^2$, respectively. 
$\mathbb{S}^n_t(c^2)$ and $\mathbb{H}^n(-c^2)$ are called a pseudo--sphere and a hyperbolic 
space, respectively.

Let ${\bf x}:M\longrightarrow\mathbb{E}^4_1$ be an isometric immersion from 
a pseudo--Riemannian hypersurface $(M, g)$ to a pseudo--Euclidean space $(\mathbb{E}^4_1,\tilde{g})$. 
We denote the Levi--Civita connections of $\mathbb{E}^4_1$
and $M$ by $\tilde{\nabla}$ and $\nabla$, respectively. 
The fundamental formulas Gauss and Weingarten of the pseudo--Riemannian hypersurface $M$ in $\mathbb{E}^4_1$ 
are given by
\begin{equation}
\label{GW}
\tilde{\nabla}_XY=\nabla_XY+\varepsilon g(AX,Y)N
\;\;\mbox{and}\;\;
\tilde{\nabla}_X N=-AX
\end{equation}
where $N$ is the unit normal vector field with
$\varepsilon=\tilde{g}(N,N),\;\varepsilon=\pm 1,$ 
and $A$ is the shape operator of the pseudo--Riemannian hypersurface $M$ of $\mathbb{E}^4_1$
in the direction $N$.
The shape operator is a self--adjoint endomorphism of the tangent space of $M$, 
and the shape operator and the second fundamental form are related by 
\begin{equation}
\tilde{g}(h(X,Y),N)=g(AX,Y).
\end{equation} 
The hypersurface $M$ is called totally umbilical if and only if we have
\begin{equation}
h(X,Y)=g(X,Y)H
\end{equation}
for $X,Y\in T_p M$.
For $\varepsilon=1$, $M$ is a Lorentzian hypersurface of $\mathbb{E}^4_1$, that is,
the induced metric $g$ of $M$ in $\mathbb{E}^4_1$ is Lorentzian.
Otherwise, $M$ is a spacelike hypersurface of $\mathbb{E}^4_1$, that is,
the induced metric $g$ of $M$ in $\mathbb{E}^4_1$ is Riemannian.

From the Gauss equation, we get the Ricci tensor $\mbox{Ric}$ of 
the pseudo--Riemannian hypersurface $M$ in $\mathbb{E}^4_1$
\begin{equation}
\label{Riccieq}
\mbox{Ric}(X,Y)=3H g(AX,Y)-g(AX,AY)
\end{equation}
where $H=\frac{1}{3}\mbox{tr}A$ is the mean curvature function. 
Also, the Codazzi equation for the pseudo--Riemannian hypersurface $M$
of $\mathbb{E}^{4}_1$ is given by 
\begin{equation}
\label{Codazzieq}
(\nabla A)(X,Y)=(\nabla A)(Y,X),\;\;X,Y\in T_p(M)
\end{equation}
where  
$(\nabla A)(X,Y)=\nabla_X(AY)-A(\nabla_X Y)$.  

The gradient of the smooth function $f$ on $M$ is the vector field defined by 
\begin{equation}
\label{grad}
g(\nabla f,X)=df(X)=X(f)
\end{equation}
for any tangent vector field $X$ to $M$ and the Lie derivative of the metric $g$ is defined by
\begin{equation}
\label{Lie}
\mathcal{L}_{X}g(Y,Z)=X(g(Y,Z))-g([X,Y],Z)-g(Y,[X,Z])
\end{equation}
for any vector fields $X,Y,Z$ on $M$.
From now on, we will say $\{e_1,e_2,e_3\}$ is an orthonormal frame field defined on $M$ if 
$g(e_1,e_1)=-\varepsilon, g(e_2,e_2)=g(e_3,e_3)=1$ and $g(e_i,e_j)=0$ for $i\neq j$ and $i,j=1,2,3$.
On the other hand, we will say $\{e_1,e_2,e_3\}$ is a pseudo--orthonormal frame field defined on $M$ if 
$g(e_1,e_1)=g(e_2,e_2)=g(e_1,e_3)=g(e_2,e_3)=0$, $g(e_1,e_2)=-1$ and $g(e_3,e_3)=1$. 

\section{A Ricci soliton on a Pseudo--Riemannian Hypersurface of $\mathbb{E}^4_1$}
In this section, we give some following key results about the Ricci soliton on a pseudo--Riemannian hypersurface of $\mathbb{E}^4_1$ whose the
potential vector is the tangential part of the position vector.

\begin{Lemma}
Let  ${\bf x}:(M,g)\longrightarrow(\mathbb{E}^4_1,\tilde{g})$ be an isometric immersion from 
a pseudo--Riemannian hypersurface $M$ to 
$\mathbb{E}^4_1$. 
Then, the following equations are satisfied
\begin{equation}
\label{pos1}
\nabla_X {\bf x}^T=X+\varepsilon\rho AX \;\;\mbox{and}\;\;
\nabla{\rho}=-A{\bf x}^{T}
\end{equation}
where $\nabla{\rho}$ is the gradient of the smooth function $\rho$
defined by $\rho=\tilde{g}({\bf x},N)$.
\end{Lemma}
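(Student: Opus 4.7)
The plan is to exploit the fact that the position vector $\mathbf{x}$ is a concurrent vector field in $\mathbb{E}^4_1$, so $\tilde{\nabla}_X \mathbf{x} = X$ for every $X \in TM$, and then read off the tangential and normal components using the Gauss and Weingarten formulas recalled in \eqref{GW}.

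First I would write the orthogonal decomposition of the position vector along $M$. Since $N$ is a unit normal with $\tilde{g}(N,N)=\varepsilon$, the normal component of $\mathbf{x}$ equals $\varepsilon\,\tilde{g}(\mathbf{x},N)N = \varepsilon\rho N$, so
\begin{equation*}
\mathbf{x} = \mathbf{x}^{T} + \varepsilon\rho N.
\end{equation*}
Differentiating this identity covariantly in the ambient connection along an arbitrary $X \in TM$ and using $\tilde{\nabla}_X \mathbf{x} = X$ yields
\begin{equation*}
X = \tilde{\nabla}_X \mathbf{x}^{T} + \varepsilon\, X(\rho)\, N + \varepsilon\rho\, \tilde{\nabla}_X N.
\end{equation*}

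Next I would substitute the Gauss formula $\tilde{\nabla}_X \mathbf{x}^{T} = \nabla_X \mathbf{x}^{T} + \varepsilon g(AX,\mathbf{x}^{T})N$ and the Weingarten formula $\tilde{\nabla}_X N = -AX$ from \eqref{GW}, and then separate the tangent and normal parts of the resulting equation. The tangential part gives directly
\begin{equation*}
\nabla_X \mathbf{x}^{T} = X + \varepsilon \rho AX,
\end{equation*}
which is the first identity. The normal part gives $\varepsilon g(AX,\mathbf{x}^{T}) + \varepsilon X(\rho) = 0$, i.e.\ $X(\rho) = -g(A\mathbf{x}^{T},X)$, where I use that $A$ is self-adjoint. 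By the definition of the gradient in \eqref{grad}, this identity for every $X$ is equivalent to $\nabla \rho = -A\mathbf{x}^{T}$, the second identity.

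There is no real obstacle here; the statement is a direct consequence of the concurrent property of the position vector together with the two fundamental formulas. The only points requiring a little care are (i) putting the correct sign $\varepsilon$ in the decomposition $\mathbf{x} = \mathbf{x}^{T} + \varepsilon\rho N$, which matters because $M$ may be either spacelike or Lorentzian, and (ii) keeping the coefficient $\varepsilon$ consistent between the Gauss formula and the normalization of $\rho$, so that the tangential and normal components match up cleanly.
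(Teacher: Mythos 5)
Your proposal is correct and follows essentially the same route as the paper: decompose $\mathbf{x}=\mathbf{x}^{T}+\varepsilon\rho N$, differentiate using the concurrency $\tilde{\nabla}_X\mathbf{x}=X$, substitute the Gauss and Weingarten formulas from \eqref{GW}, and compare tangential and normal components. The sign bookkeeping with $\varepsilon$ and the use of self-adjointness of $A$ to pass from $X(\rho)=-g(AX,\mathbf{x}^{T})$ to $\nabla\rho=-A\mathbf{x}^{T}$ match the paper's argument.
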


\begin{proof}
Let ${\bf x}:M\longrightarrow\mathbb{E}^4_1$
be an isometric immersion from a pseudo--Riemmanian hypersurface $(M,g)$
to a Minkowski space $(\mathbb{E}^4_1,\tilde{g})$.
We know that the position vector ${\bf x}$ can be decomposed as
\begin{equation}
\label{eq1}
 {\bf x}={\bf x}^{T}+\varepsilon\rho N   
\end{equation}
where ${\bf x}^{T}$ is the tangential component of ${\bf x}$ and 
$\rho=\tilde{g}({\bf x}, N)$ is a smooth function.

Considering ${\bf x}$ is the concurrent vector field on $\mathbb{E}^4_1$, i.e.
$\tilde{\nabla}_X{\bf x}=X$ for any tangent vector field $X$ to $M$,
and the equation \eqref{eq1}, 
we have
\begin{align}
\label{leq1}
X=\tilde{\nabla}_X{\bf x}
=\tilde{\nabla}_X({\bf x}^{T}+\varepsilon\rho N)
=\tilde{\nabla}_X{\bf x}^{T}
+\varepsilon(X(\rho)N+\rho\tilde{\nabla}_X N).
\end{align}
Using the Gauss and Weingarten formulas given in \eqref{GW},
we obtain the following one:
\begin{align}
\label{leq2}
X=\nabla_X{\bf x}^{T}+\varepsilon g(AX,{\bf x}^{T})N
+\varepsilon(X(\rho)N-\rho AX).
\end{align}
Then, comparing the tangential and normal part of the equation \eqref{leq2}, we get
\begin{align}
\nabla_X{\bf x}^{T}=X+\varepsilon\rho AX\;\;\mbox{and}\;\;
X(\rho)=- g(AX,{\bf x}^{T})
\end{align}
for any tangent vector field $X$ to $M$.\qed
\end{proof}

Now, we get the necessary and sufficent condition to be Ricci soliton $(M,g,{\bf x}^T,\lambda)$ on the pseudo--Riemannian hypersurface $M$ in $\mathbb{E}^4_1$.  

\begin{Theorem}
\label{Thm1}
A pseudo--Riemannian hypersurface $M$ in the Minkowksi space 
$\mathbb{E}^4_1$ admits a Ricci soliton
$(M, g, {\bf x}^{T}, \lambda)$ if and only if
the Ricci tensor $\mbox{Ric}$ of $(M, g)$ satisfies the following equation:
\begin{equation}
\label{Ricci1}
\mbox{Ric}(X,Y)=(\lambda-1)g(X,Y)-\varepsilon\rho g(AX,Y).
\end{equation}
Moreover, $(M, g, {\bf x}^{T}, \lambda)$ is a gradient Ricci soliton.
\end{Theorem}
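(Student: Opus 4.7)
The plan is to expand the defining equation \eqref{riccisol} of the Ricci soliton with $\xi = \mathbf{x}^T$ by directly computing the Lie derivative of $g$ in terms of the covariant derivative formula supplied by the preceding Lemma. Using the standard identity
\[
\mathcal{L}_{\mathbf{x}^T} g(X,Y) = g(\nabla_X \mathbf{x}^T, Y) + g(X, \nabla_Y \mathbf{x}^T),
\]
I substitute $\nabla_X \mathbf{x}^T = X + \varepsilon\rho\, AX$ from \eqref{pos1}. Since the shape operator $A$ is self-adjoint with respect to $g$, the two cross terms combine and yield
\[
\tfrac{1}{2}\mathcal{L}_{\mathbf{x}^T} g(X,Y) = g(X,Y) + \varepsilon\rho\, g(AX,Y).
\]
Plugging this into \eqref{riccisol} and isolating $\mathrm{Ric}$ produces exactly \eqref{Ricci1}, and reading the chain of equivalences backwards gives the converse; no further assumption on $M$ is needed at this stage, so the equivalence is clean.

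For the gradient Ricci soliton statement, the natural candidate potential is $f = \tfrac{1}{2}\tilde{g}(\mathbf{x},\mathbf{x})$. Differentiating along a tangent vector $X$ and using that $\mathbf{x}$ is a concurrent vector field on $\mathbb{E}^4_1$ gives
\[
X(f) = \tilde{g}(\tilde{\nabla}_X \mathbf{x},\mathbf{x}) = \tilde{g}(X,\mathbf{x}) = g(X,\mathbf{x}^T),
\]
where the last equality holds because $X$ is tangent to $M$ and the normal part of $\mathbf{x}$ contributes nothing. By the definition \eqref{grad} of the gradient this is precisely $g(X,\nabla f)$, so $\nabla f = \mathbf{x}^T$ and the soliton is gradient.

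I do not anticipate a serious obstacle here: both halves are essentially bookkeeping once Lemma 3.1 is in hand. The only point to watch carefully is the symmetrization in the Lie derivative, where self-adjointness of $A$ is what turns $\varepsilon\rho\, g(AX,Y) + \varepsilon\rho\, g(X,AY)$ into $2\varepsilon\rho\, g(AX,Y)$; without this symmetry one would be left with an asymmetric tensor and the identification with $\mathrm{Ric}$ would fail. Everything else is linear algebra and the definition of the tangential decomposition of $\mathbf{x}$.
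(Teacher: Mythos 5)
Your proposal is correct and follows essentially the same route as the paper: both compute $\tfrac{1}{2}\mathcal{L}_{\mathbf{x}^T}g(X,Y)=g(X,Y)+\varepsilon\rho\,g(AX,Y)$ from the formula $\nabla_X\mathbf{x}^T=X+\varepsilon\rho AX$ of Lemma 3.1 (your covariant-derivative form of the Lie derivative is equivalent to the paper's bracket formula \eqref{Lie} via metric compatibility), substitute into \eqref{riccisol}, and then exhibit the potential $f=\tfrac{1}{2}\tilde{g}(\mathbf{x},\mathbf{x})$ with $\nabla f=\mathbf{x}^T$ exactly as the paper does. Your explicit remark about where self-adjointness of $A$ enters is a point the paper leaves implicit, but the argument is the same.
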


\begin{proof}
Suppose that ${\bf x}:(M,g)\longrightarrow(\mathbb{E}^4_1,\tilde{g})$ is an isometric immersion
from a pseudo--Riemannian hypersurface $M$ to $\mathbb{E}^4_1$.
Then, we recall ${\bf x}={\bf x}^{T}+\varepsilon\rho N$ as explained above.
By the virtue of the equations \eqref{Lie} and \eqref{pos1}, 
we get the Lie derivative
$\mathcal{L}_{{\bf x}^T} g$ as
\begin{equation}
\left(\mathcal{L}_{{\bf x}^T} g\right) (X,Y)=
2 g(X,Y)+2\varepsilon\rho g(AX,Y)
\end{equation}
for any vector fields $X,Y$ and ${\bf x}^T$ on $M$.
Then, considering this equation with the definition of Ricci soliton
\eqref{riccisol}
we get the desired equation \eqref{Ricci1} easily.

Conversely, it can be immediately shown that a pseudo--Riemannian hypersurface 
$M$ in $\mathbb{E}^4_1$ satisfies the equation
\eqref{riccisol} with the potential vector field ${\bf x}^T$ and 
a constant $\lambda$ if the Ricci tensor of $M$ in $\mathbb{E}^4_1$ holds the given condition in \eqref{Ricci1}. 
Thus, $(M,g,\lambda, {\bf x}^T)$ is a Ricci soliton on a pseudo--Riemannian 
hypersurface $M$ of $\mathbb{E}^4_1$. 

To prove that $(M,g,\lambda, {\bf x}^T)$ is also a gradient Ricci soliton, 
we choose any smooth function $f$ as 
$f=\frac{1}{2}\tilde{g}({\bf x},{\bf x})$. 
Then, we compute
\begin{equation}
X(f)=\tilde{g}(\tilde{\nabla}_X{\bf x},{\bf x})=\tilde{g}(X,{\bf x})=g(X,{\bf x}^T)
\end{equation}
which implies $\nabla f={\bf x}^T$. 
Thus, $(M,g,\lambda, {\bf x}^T)$ is a gradient Ricci soliton on the pseudo--Riemannian 
hypersurface $M$ in $\mathbb{E}^4_1$.
\qed
\end{proof}

In \cite{Magid}, it was given that a symmetric endomorphism $A$ of a vector space $V$
with a Lorentzian inner product can be put into four different forms which give counterpart of Riemannian case. 
Since the shape operator $A$ of the pseudo--Riemannian 
hypersurface $M$ in $\mathbb{E}^4_1$ is a symmetric endomorphism, we consider 
the four following forms for the shape operator $A$ with respect to orthonormal and pseudo--orthonormal 
bases of $\mathbb{E}^4_1$.   

\subsection{A Ricci soliton on pseudo--Riemannian hypersurfaces of $\mathbb{E}^4_1$ with a diagonalizable shape operator}
In this section, we are going to consider the case the Ricci soliton on the pseudo--Riemannian hypersurface 
in $\mathbb{E}^4_1$ that the shape operator 
is diagonalizable. 

For using later, we recall the definition of isoparametric and a generalized constant ratio pseudo--Riemannian hypersurfaces 
in $\mathbb{E}^4_1$ as follows:
A pseudo--Riemannian hypersurface is said to be isoparametric if its shape operator 
is diagonalizable and has constant principal curvatures. 
A pseudo--Riemannian hypersurface is said to be a generalized constant ratio hypersurface if the tangential
part of its position vector is one of principal direction of the shape operator. 

Suppose that $M$ is a pseudo--Riemannian hypersurface in $\mathbb{E}^4_1$
with a diagonalizable shape operator $A$. 
Then, we consider an orthonormal tangent frame field 
$\{e_1, e_2, e_3\}$ on $M$ such that $Ae_i=a_ie_i,\;i=1,2,3,$ for smooth functions $a_1,a_2$ and $a_3$.
The connection forms corresponding to chosen tangent frame field is defined by 
$w_{ij}(e_k)=\varepsilon_j g(\tilde{\nabla}_{e_k}{e_i},e_j)$.
Also, from the Codazzi equation \eqref{Codazzieq}, we get 
\begin{align}
\begin{split}
\label{codazzid1}
e_i(a_j)&=g(e_j,e_j)\omega_{ij}(e_j)(a_i-a_j)\\
\omega_{ij}(e_k)(a_i-a_j)&=\omega_{ik}(e_j)(a_i-a_k)
\end{split}
\end{align}
where indices $i,j,k$ are distinct and $i,j,k=1,2,3$.

On the other hand, the equation \eqref{Riccieq} gives us the components of Ricci tensor $\mbox{Ric}$ 
of $M$ corresponding to chosen frame field as follows:
\begin{align}
\label{ricI}
\begin{split}
\mbox{Ric}(e_1,e_1)&=-\varepsilon a_1(a_2+a_3),\qquad
\mbox{Ric}(e_2,e_2)=a_2(a_1+a_3),\\
\mbox{Ric}(e_3,e_3)&=a_3(a_1+a_2),\qquad\;\;
\mbox{Ric}(e_i,e_j)=0,\;\; i\neq j
\end{split}
\end{align}
for $i,j=1,2,3$. 
Now, we determine whether there exists a Ricci soliton 
$(M,g,{\bf x}^T,\lambda)$ on the pseudo--Riemannian hypersurface $M$ in $\mathbb{E}^4_1$ 
whose the shape operator is a diagonalizable one.
\begin{Lemma}
\label{lemma1}
Let ${\bf x}:(M,g)\longrightarrow(\mathbb{E}^4_1,\tilde{g})$ be an isometric immersion from a pseudo--Riemannian hypersurface $M$ to the Minkowski space $\mathbb{E}^4_1$ with a diagonalizable the shape operator $A$ such as 
\begin{equation}
    A=\mbox{diag}(a_1,a_2,a_3)
\end{equation}
for some smooth functions $a_1,a_2,a_3$. 
Then, $(M,g,{\bf x}^T, \lambda)$ is a Ricci soliton if and only if 
the shape operator $A$ of $M$ satisfies one of the followings:
\begin{itemize}
    \item [(i.)] $A=c\bf{I}$ for a nonzero constant $c$ and $3\times 3$ identity matrix ${\bf I}$.  
    
    \item [(ii.)] $A$ has two distinct principal curvatures.
\end{itemize} 
\end{Lemma}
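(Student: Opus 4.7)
The plan is to evaluate the Ricci soliton characterisation from Theorem~\ref{Thm1} on the orthonormal frame $\{e_1,e_2,e_3\}$ that diagonalises $A$, with $Ae_i = a_i e_i$. Off-diagonal pairs of \eqref{Ricci1} vanish identically on both sides and carry no information. Substituting $g(e_1,e_1) = -\varepsilon$, $g(e_2,e_2) = g(e_3,e_3) = 1$, and the diagonal Ricci components from \eqref{ricI}, the three diagonal pairs collapse to the single uniform system
\begin{equation*}
a_i \bigl( a_j + a_k + \varepsilon \rho \bigr) = \lambda - 1, \qquad \{i,j,k\} = \{1,2,3\}.
\end{equation*}
Pairwise subtraction eliminates $\lambda$ and yields the three factored identities
\begin{equation*}
(a_1 - a_2)(a_3 + \varepsilon\rho) = 0, \quad (a_1 - a_3)(a_2 + \varepsilon\rho) = 0, \quad (a_2 - a_3)(a_1 + \varepsilon\rho) = 0,
\end{equation*}
on which the entire classification rests.

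Case analysis on these three products finishes the forward direction. If $a_1, a_2, a_3$ are pairwise distinct, then each of $a_i + \varepsilon \rho$ must vanish, which forces $a_1 = a_2 = a_3$ and contradicts the assumption; three distinct principal curvatures are therefore impossible. If all three coincide, so that $A = c\,\mathbf{I}$ for a smooth function $c$, I invoke the Codazzi equation \eqref{Codazzieq}: applied to $A = c\,\mathrm{Id}$ it reduces to $X(c)\,Y = Y(c)\,X$, and choosing $X,Y$ linearly independent forces $c$ to be constant, giving case (i). In the remaining alternative exactly two of the $a_i$ agree, which is case (ii); there one of the factored identities additionally pins down $\rho$ in terms of the repeated curvature.

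For the converse one substitutes each configuration back into \eqref{Ricci1}: under $A = c\,\mathbf{I}$ the three scalar equations collapse to $c(2c + \varepsilon\rho) = \lambda - 1$, and compatibility with $\nabla \rho = -A\,\mathbf{x}^T$ from the preceding lemma yields a legitimate constant $\lambda$; the two-curvature case is treated analogously by setting $\rho = -\varepsilon a$, where $a$ is the double eigenvalue. The main obstacle is conceptual rather than computational: Codazzi must be applied on connected open sets where the umbilicity is smoothly defined in order to upgrade the pointwise identity $A = c(p)\,\mathbf{I}$ to global constancy of $c$, and the degenerate possibility $c = 0$ (a totally geodesic hyperplane) has to be excluded by convention in order to match the ``nonzero constant'' clause in (i).
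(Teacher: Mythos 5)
Your proof follows the same route as the paper's: evaluate \eqref{Ricci1} on the diagonalising frame, equate with \eqref{ricI} to obtain $a_i(a_j+a_k+\varepsilon\rho)=\lambda-1$, subtract pairwise to get the factored products \eqref{cond1}, and split into the umbilical case (where Codazzi forces $c$ constant and $c=0$ is discarded) and the two-distinct-curvature case, with the converse checked by substitution. Your case analysis is marginally tidier than the paper's --- you write down all three factored identities and explicitly rule out three pairwise distinct principal curvatures, whereas the paper records only two products and two of the possible two-curvature configurations --- but the argument is essentially identical.
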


\begin{proof}
Suppose that a pseudo--Riemannian hypersurface $M$ of $\mathbb{E}^4_1$ has  
a diagonalizable shape operator $A$. Thus, we have an orthonormal tangent frame field 
$\{e_1, e_2, e_3\}$ on $M$ such that $Ae_i=a_ie_i,\;i=1,2,3,$ for smooth functions $a_1,a_2$ and $a_3$.
For the chosen frame field, the equations 
\eqref{codazzid1} and \eqref{ricI} hold. 
Since $(M,g,{\bf x}^T, \lambda)$ is Ricci soliton, 
from the equation \eqref{Ricci1} in Theorem \ref{thm1}, 
we calculate the components of Ricci tensor $\mbox{Ric}$ of $M$ in $\mathbb{E}^4_1$ as follows:
\begin{align}
\label{riccI}
\begin{split}
\mbox{Ric}(e_1,e_1)&=\varepsilon(1-\lambda)+\rho a_1,\qquad
\mbox{Ric}(e_2,e_2)=\lambda-1-\varepsilon\rho a_2,\\
\mbox{Ric}(e_3,e_3)&=\lambda-1-\varepsilon\rho a_3,\qquad\;\;
\mbox{Ric}(e_i,e_j)=0,\;\; i\neq j
\end{split}
\end{align}
for $i,j=1,2,3$. 
Comparing equations \eqref{ricI} and \eqref{riccI}, 
we have the following system of equation:
\begin{align}
\label{eq1I}
\lambda-1-\varepsilon\rho a_1&=a_1(a_2+a_3),\\
\label{eq2I}
\lambda-1-\varepsilon\rho a_2&=a_2(a_1+a_3),\\
\label{eq3I}
\lambda-1-\varepsilon\rho a_3&=a_3(a_1+a_2).
\end{align}
Hence, we find the following two equations 
\begin{equation}
\label{cond1}
(a_1-a_2)(a_3+\varepsilon\rho)=0\:\:\mbox{and}\;\; 
(a_1-a_3)(a_2+\varepsilon\rho)=0
\end{equation}
which give two different solutions given as follows.

If $a_1=a_2=a_3$, that is, $M$ is a totally umbilical pseudo--Riemannian hypersurface in $\mathbb{E}^4_1$, 
the equations in \eqref{cond1} are satisfied trivially. 
Also, from the equation \eqref{codazzid1}, we find that all $a_i$'s are non--zero constant.
Remark that when $a_1=a_2=a_3=0$, the equation \eqref{ricI} gives $\mbox{Ric}(e_i,e_j)=0$ for $i,j=1,2,3$. 
We omit this case. Thus, we obtain the desired result (i). Moreover, $\lambda=2c^2+\varepsilon\rho c$.

$a_1=a_2=-\varepsilon\rho\neq a_3$ or $a_1=a_3=-\varepsilon\rho\neq a_2$ 
are also the solutions of the given system in \eqref{cond1}.  
Thus, we say that the shape operator $A$ of the pseudo--Riemannian hypersurface $M$
in $\mathbb{E}^4_1$ has two distinct principal curvatures. Moreover, $\lambda=1+a_1a_3$ or $\lambda=1+a_2a_3$.

Conversely, it can be easily seen that the Ricci tensor $\mbox{Ric}$ 
of the pseudo--Riemannian hypersurface $M$ in $\mathbb{E}^4_1$ 
whose the shape operator $A$ is one of the cases (i) and (ii)
satisfies the equation \eqref{Ricci1}. 
Thus, $(M,g,{\bf x}^T,\lambda)$ is a Ricci soliton 
on such a pseudo--Riemannian hypersurface $M$ of $\mathbb{E}^4_1$.
\qed
\end{proof}

In \cite{AKY}, N. Abe et. al studied the pseudo--Riemannian hypersurfaces in a pseudo--Riemannian
real space forms and they got a congruence theorem for isoparametric pseudo--Riemannian hypersurfaces whose 
shape operators have at most two mutually distinct constant principal curvatures. 
From the result in \cite{AKY}, we consider the following examples of isoparametric pseudo--Riemannian hypersurfaces 
in the Minkowski space $\mathbb{E}^4_1$: 
\begin{example}
\label{ex1}
A hyperbolic space $\mathbb{H}^3(-c^2)$ with constant curvature $-c^2$ defined by 
$$\mathbb{H}^3(-c^2)=\left\{{\bf x}=(x_1,x_2,x_3,x_4)\in\mathbb{E}^4_1\;|\;-x_1^2+x_2^2+x_3^2+x_4^2=-c^{-2}\right\}$$
and de Sitter space $\mathbb{S}^3_1(c^2)$ with constant curvature $c^2$ defined by 
$$\mathbb{S}^3_1(c^2)=\left\{{\bf x}=(x_1,x_2,x_3,x_4)\in\mathbb{E}^4_1\;|\;-x_1^2+x_2^2+x_3^2+x_4^2={c^{-2}}\right\}$$
are totally umbilical isoparametric spacelike and Lorentzian hypersurfaces in $\mathbb{E}^4_1$ with 
the shape operator $A=c{\bf I}$ for $3\times 3$ identity matrix ${\bf I}$, respectively. 
From Theorem \ref{thm1}, it can be easily seen that the Ricci tensor of 
$\mathbb{H}^3(-c^2)$ and $\mathbb{S}^3_1(c^2)$ in $\mathbb{E}^4_1$ 
satisfy \eqref{Ricci1} with $\lambda=c^2$ or $\lambda=3c^2$. 
Thus, $(\mathbb{H}^3(-c^2),g,{\bf x}^T,\lambda)$ and $(\mathbb{S}^3_1(c^2),g,{\bf x}^T,\lambda)$ 
are a shrinking Ricci soliton in $\mathbb{E}^4_1$.  
\end{example}

\begin{example}
\label{ex3}
Let $\mathbb{H}^2(-c^2)\times\mathbb{E}$ in the Minkowski space $\mathbb{E}^4_1$ be a spacelike hyperbolic cylinder 
defined by 
\begin{equation}
\mathbb{H}^2(-c^2)\times\mathbb{E}=\left\{{\bf x}=(x_1,x_2,x_3,x_4)\in\mathbb{E}^4_1\;:\;
(x_1,x_2,x_3)\in\mathbb{E}^3_1\;\mbox{and}\; -x_1^2+x_2^2+x_3^2=-{c^{-2}}\right\}
\end{equation}
for a nonzero constant $c$. 
The shape operator $A$ has principal curvatures $c, c$ and $0$. 
From Theorem \ref{thm1}, it can be easily seen that the Ricci tensor of 
the spacelike hyperbolic cylinder $\mathbb{H}^2(-c^2)\times\mathbb{E}$ in $\mathbb{E}^4_1$ 
satisfies \eqref{Ricci1} with $\lambda=1$. 
Hence, $(\mathbb{H}^2(-c^2)\times\mathbb{E},g,{\bf x}^T,\lambda)$ is a shrinking Ricci soliton in $\mathbb{E}^4_1$. 
\end{example}

\begin{example}
\label{ex2}
Let $\mathbb{S}^2_1(c^2)\times\mathbb{E}$ in the Minkowski space $\mathbb{E}^4_1$ be a pseudo--spherical 
cylinder parametrized as 
\begin{equation}
\mathbb{S}^2_1(c^2)\times\mathbb{E}=\left\{{\bf x}=(x_1,x_2,x_3,x_4)\in\mathbb{E}^4_1\;:\;
(x_1,x_2,x_3)\in\mathbb{E}^3_1\;\mbox{and}\; -x_1^2+x_2^2+x_3^2={c^{-2}}\right\}
\end{equation}
for a nonzero constant $c$. 
The shape operator $A$ has principal curvatures $c, c$ and $0$. 
From Theorem \ref{thm1}, it can be easily seen that 
the a pseudo--spherical 
cylinder $\mathbb{S}^2_1(c^2)\times\mathbb{E}$ in $\mathbb{E}^4_1$ 
satisfies \eqref{Ricci1} with $\lambda=1$. 
Hence, $(\mathbb{S}^2_1(c^2)\times\mathbb{E},g,{\bf x}^T,\lambda)$ is a shrinking Ricci soliton in $\mathbb{E}^4_1$.
\end{example}

By using Theorem \ref{thm1}, we get a classification theorem for the Ricci soliton on 
the pseudo--Riemannian hypersurface in $\mathbb{E}^4_1$ with a diagonalizable shape operator
and constant mean curvature as follows.

\begin{Theorem}
\label{thm1}
Let $M$ be a pseudo--Riemannian hypersurface of the Minkowski space $\mathbb{E}^4_1$ 
with a diagonalizable the shape operator and constant mean curvature. 
Then, $(M,g,{\bf x}^T, \lambda)$ is a Ricci soliton if and only if
the pseudo--Riemannian hypersurface $M$ is an open portion of one of the followings:
\begin{itemize}
\item [(i.)] a hyperbolic space $\mathbb{H}^3(-c^2)\subset\mathbb{E}^4_1$ given in Example \ref{ex1},

\item [(ii.)] de Sitter space $\mathbb{S}^3_1(c^2)\subset\mathbb{E}^4_1$ given in Example \ref{ex1},

\item [(iii.)] a hyperbolic cylinder $\mathbb{H}^2(-c^2)\times\mathbb{E}$ given in Example \ref{ex3},

\item [(iv.)]  a pseudo--spherical cylinder $\mathbb{S}^2_1(c^2)\times\mathbb{E}$ given in Example \ref{ex2},
\end{itemize}
\end{Theorem}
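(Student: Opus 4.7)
The plan is to combine Lemma \ref{lemma1} with the constant mean curvature hypothesis to force the principal curvatures to be constants, and then invoke the classification of isoparametric pseudo--Riemannian hypersurfaces from \cite{AKY} to identify $M$.

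First, I split according to the two alternatives of Lemma \ref{lemma1}. In alternative (i), $A=c\mathbf{I}$ for a nonzero constant $c$, so $M$ is totally umbilical with constant nonzero principal curvature; the classical classification of totally umbilical pseudo--Riemannian hypersurfaces of $\mathbb{E}^4_1$ identifies $M$ with an open portion of $\mathbb{H}^3(-c^2)$ when $\varepsilon=-1$, and of $\mathbb{S}^3_1(c^2)$ when $\varepsilon=1$, producing cases (i) and (ii) of the theorem.

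For alternative (ii) of Lemma \ref{lemma1}, after relabeling I may assume $a_1=a_2=-\varepsilon\rho\neq a_3$. Substituting $a_2=a_1=-\varepsilon\rho$ into \eqref{eq1I} yields $\lambda=1+a_1a_3$, so since $\lambda$ is a constant, $a_1a_3$ is constant. Combined with the constant mean curvature relation $2a_1+a_3=3H=\mathrm{const}$, this forces $a_1$ to satisfy a polynomial of degree two with constant coefficients; thus $a_1$ is a constant, and hence so are $a_3$ and $\rho=-\varepsilon a_1$. By \eqref{pos1} we then have $A{\bf x}^T=-\nabla\rho=0$; expanding ${\bf x}^T=\sum_i x_i e_i$ in the principal frame reduces this to $a_1x_1=a_1x_2=a_3x_3=0$.

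The degenerate subcase $a_1=0$ would give $\rho=0$ together with principal curvatures $(0,0,a_3)$, so $M$ would be a cylinder over a plane curve of nonzero constant curvature; a short direct check shows such a cylinder cannot satisfy $\tilde{g}({\bf x},N)\equiv 0$, so this subcase is vacuous. Hence $a_1\neq 0$, which forces $x_1=x_2=0$, and if $x_3$ were also zero then ${\bf x}^T=0$ would make $M$ totally umbilical, contradicting $a_1\neq a_3$. Therefore $x_3\neq 0$ and $a_3=0$, so $M$ is isoparametric with constant principal curvatures $(c,c,0)$, $c\neq 0$. The congruence theorem of \cite{AKY} identifies $M$ with an open portion of $\mathbb{H}^2(-c^2)\times\mathbb{E}$ when $\varepsilon=-1$, or of $\mathbb{S}^2_1(c^2)\times\mathbb{E}$ when $\varepsilon=1$, yielding (iii) and (iv); the converse direction is exactly Examples \ref{ex1}--\ref{ex2}. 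The main obstacle is pinning down the constancy of $a_1$ and excluding the degenerate $a_1=0$ subcase, since once isoparametricity with curvatures $(c,c,0)$ is established, the classification in \cite{AKY} delivers the conclusion.
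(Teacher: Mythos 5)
Your proof is correct and reaches the same classification, but the core of Case (b) is handled by a genuinely different and shorter argument than the paper's. The paper derives the constancy of the principal curvatures by working through the Codazzi equations: it computes the connection forms, uses constant mean curvature to get $e_3(a_3)=-2e_3(a_1)$, and combines this with \eqref{codazzid2_3}--\eqref{codazzid2_4} to force $\omega_{13}(e_1)=0$ or $a_3=2a_1$, in either case concluding $a_1=-\varepsilon\rho$ is constant and then $a_3=0$ via $e_3(\rho)=-a_3g(e_3,{\bf x}^T)$. You instead observe that $a_1a_3=\lambda-1$ and $2a_1+a_3=3H$ are two constant symmetric functions of the principal curvatures, so $a_1$ satisfies a fixed quadratic and is therefore (locally) constant; constancy of $\rho$ then gives $A{\bf x}^T=-\nabla\rho=0$ directly, and the dichotomy $x_3=0$ (umbilical, excluded) versus $a_3=0$ finishes the case. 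This bypasses the entire connection-form computation and is cleaner. Your treatment of the degenerate subcase $a_1=0$ also differs: the paper dismisses it on the grounds that the Ricci tensor vanishes (treating it as a trivial/excluded soliton), whereas you argue it is geometrically vacuous because a hypersurface with constant principal curvatures $(0,0,a_3)$, $a_3\neq 0$, is a cylinder over a circle or hyperbola on which $\rho=\tilde{g}({\bf x},N)$ cannot vanish identically; this is correct and arguably more satisfactory, though the asserted ``short direct check'' should be written out (one needs the congruence result of \cite{AKY} to identify the cylinder and then note that a translation cannot make $\tilde{g}({\bf x},N)$ identically zero). Both arguments then invoke \cite{AKY} to identify the isoparametric hypersurfaces with curvatures $(c,c,0)$ and the umbilical ones, so the endgame coincides.
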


\begin{proof}
Assume that  $(M,g,{\bf x}^T, \lambda)$ is a Ricci soliton on the pseudo--Riemannian
hypersurface $M$ of $\mathbb{E}^4_1$ with a diagonalizable shape operator $A=\mbox{diag}(a_1,a_2,a_3)$ 
for some smooth functions $a_1, a_2, a_3$ and the mean curvature $H=\frac{1}{3}(a_1+a_2+a_3)$ is a constant .  
From Lemma \ref{lemma1}, there are two following cases occur for the shape operator $A$.\\
\textit{Case a.} 
Suppose that there exists an orthonormal tangent frame field $\{e_1, e_2, e_3\}$ on $M$ such that 
$Ae_i=ce_i$ for a nonzero constant $c$. That is, 
$M$ is a totally umbilical isoparametric pseudo--Riemannian hypersurface of $\mathbb{E}^4_1$. 
Also, it is a trivial result that $M$ has constant mean curvature $H=c$ in $\mathbb{E}^4_1$. 
Thus, from the result in \cite{AKY} given as Example \ref{ex1}, 
we get the pseudo--Riemannian hypersurfaces in $\mathbb{E}^4_1$ given by (i) and (ii) 
with respect to $\varepsilon=1$ or $\varepsilon=-1$.  
\\
\textit{Case b.} Suppose that the shape operator $A$ has two distinct principal curvatures. 
Without loss of generality, we take $a_1=a_2\neq a_3$.
Then, the second one of \eqref{cond1} yields
$a_1=a_2=-\varepsilon\rho$. 
Hence, we get $a_1a_3=\lambda-1$ by using the equation \eqref{eq1I}.
In such case, the equations \eqref{codazzid1} become the followings:
\begin{align}
\label{codazzid2_1}
e_1(a_1)&=e_2(a_1)=e_1(a_3)=e_2(a_3)=0,\\
\label{codazzid2_2}
\omega_{13}(e_2)&=\omega_{23}(e_1)
=\omega_{13}(e_3)=\omega_{23}(e_3)=0,\\
\label{codazzid2_3}
e_3(a_1)&=-\varepsilon\omega_{13}(e_1)(a_1-a_3),\\
\label{codazzid2_4}
a_1e_3(a_3)&=\varepsilon\omega_{13}(e_1)(a_1-a_3)a_3.
\end{align} 
Taking consideration $e_i(a_1)=0$ for $i=1,2$ and $a_1=-\varepsilon\rho$, 
we have $e_i(\rho)=0$ for $i=1,2$. Thus,
the equation \eqref{pos1} implies 
\begin{align}
\label{grad1}
    &a_1g(e_1,{\bf x}^T)=a_1g(e_2,{\bf x}^T)=0,\\
\label{grad2}    
    &e_3(\rho)=-a_3g(e_3,{\bf x}^T).
\end{align}
For $a_1=0$, $a_1a_3=\lambda-1$ implies $\lambda=1$. Also, we get $\rho=\tilde{g}({\bf x},N)=0$, 
that is, ${\bf x}={\bf x}^T$. 
On the other hand, from the equation \eqref{ricI}, 
it can be easily seen that
the Ricci tensor of $M$ in $\mathbb{E}^4_1$ equals zero. 
Thus, $a_1\neq 0$, that is, $g(e_1,{\bf x}^T)=g(e_2,{\bf x}^T)=0$.  
Hence, the position vector 
${\bf x}$ of $M$ in $\mathbb{E}^4_1$ is decomposed as 
\begin{equation}
\label{gcr}
{\bf x}=g({\bf x}^T,e_3)e_3+\varepsilon\rho N
\end{equation}
which implies that $M$ is a generalized constant ratio pseudo--Riemannian hypersurfaces in $\mathbb{E}^4_1$.
Since $M$ has constant mean curvature in $\mathbb{E}^4_1$,
we get $e_3(a_3)=-2e_3(a_1)$.
Considering the equations \eqref{codazzid2_3} and \eqref{codazzid2_4} with this equality, 
we have 
\begin{equation}
\varepsilon\omega_{13}(e_1)(a_1-a_3)\left(1-\frac{a_3}{2a_1}\right)=0.
\end{equation}
When $\omega_{13}(e_1)=0$, from the equations \eqref{codazzid2_3}, 
we obtain $a_1=-\varepsilon\rho$ is a nonzero constant. 
Thus, the equation \eqref{grad2} gives $a_3=0$ which means $\lambda=1$. 
Remark that for $g({\bf x}^T,e_3)=0$, we get ${\bf x}^T=0$. In this case,
we have a trivial Ricci soliton.   
For $a_3=2a_1$, $a_1a_3=2a_1^2=\lambda-1$ implies that $a_1=-\varepsilon\rho$ is a nonzero constant.
Thus, $a_3=0$ which implies $\lambda=1$ again. Hence, the shape operator $A$ has the principal curvatures 
$a_1=a_2=-\varepsilon\rho$ and $a_3=0$. Moreover, $\lambda=1$. 
In this case, $M$ is an isoparametric pseudo--Riemannian hypersurface in $\mathbb{E}^4_1$. 
Thus, from the result in \cite{AKY} given as Example \ref{ex2} and Example \ref{ex3}, 
we get the pseudo--Riemannian hypersurfaces given by (iii) and (iv) 
with respect to $\varepsilon=1$ or $\varepsilon=-1$ for $c=\varepsilon\rho$.\qed
\end{proof}

\begin{Corollary}
\label{cor1}
There exists only shrinking Ricci soliton on a pseudo--Riemannian hypersurface in $\mathbb{E}^4_1$ with a diagonalizable shape
operator and constant mean curvature. 
\end{Corollary}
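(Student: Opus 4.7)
The plan is to view this corollary as an immediate consequence of the classification already established in Theorem \ref{thm1}, where the only nontrivial content is to check the sign of $\lambda$ in each of the four cases. Since Theorem \ref{thm1} asserts that the hypersurface must be an open portion of $\mathbb{H}^3(-c^2)$, $\mathbb{S}^3_1(c^2)$, $\mathbb{H}^2(-c^2)\times\mathbb{E}$, or $\mathbb{S}^2_1(c^2)\times\mathbb{E}$, it suffices to read off the soliton constant $\lambda$ in each of the four model examples and verify $\lambda > 0$.

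First I would apply Theorem \ref{thm1} to a pseudo--Riemannian hypersurface $(M,g,{\bf x}^T,\lambda)$ with diagonalizable shape operator and constant mean curvature, reducing to the four listed models. For cases (i) and (ii), the shape operator is $A = c\mathbf{I}$ with $c$ a nonzero real constant, and Example \ref{ex1} records that the Ricci soliton constant is $\lambda = c^2$ (if $\varepsilon = -1$) or $\lambda = 3c^2$ (if $\varepsilon = 1$); either way $\lambda > 0$, since $c \neq 0$. For cases (iii) and (iv), Examples \ref{ex3} and \ref{ex2} exhibit the principal curvatures $c, c, 0$ and Theorem \ref{Thm1} (applied to \eqref{Ricci1} at $e_3$ with $a_3=0$) gives $\lambda = 1 > 0$.

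Thus in every one of the four possible cases we have $\lambda > 0$, so the Ricci soliton $(M,g,{\bf x}^T,\lambda)$ is shrinking. The only point that requires slight care — not really an obstacle — is to rule out the degenerate situation $c = 0$ in cases (i) and (ii), but this is already excluded both by the definition of the cited model hypersurfaces (where $c$ is specified nonzero) and by the proof of Lemma \ref{lemma1}, where the case $a_1=a_2=a_3=0$ was explicitly omitted. There is no separate computation left to perform beyond citing these prior results.
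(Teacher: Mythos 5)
Your proposal is correct and is exactly the argument the paper intends: the corollary is an immediate consequence of the classification in Theorem \ref{thm1} together with the values of $\lambda$ recorded in Examples \ref{ex1}, \ref{ex3}, \ref{ex2} (all positive since $c\neq 0$), and the paper itself offers no further proof. The only cosmetic remark is that the exact value of $\lambda$ in the totally umbilical cases depends on sign conventions that the paper handles somewhat loosely, but positivity of $\lambda$ is unaffected, so your conclusion stands.
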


\subsection{A Ricci soliton on Lorentzian hypersurfaces of $\mathbb{E}^4_1$ with nondiagonalizable shape operator} 
In this section, we will consider the Ricci soliton on the Lorentzian hypersurface of $\mathbb{E}^4_1$ having nondiagonalizable shape operator. 

From \cite{Magid}, we know that the shape operator $A$ of the Lorentzian hypersurface $M$ in $\mathbb{E}^4_1$ can be put one of the following form:
\begin{itemize}
    \item [(i.)] 
    $A=\left(
    \begin{array}{ccc}
    a_1 & b_1 & 0\\
    -b_1 & a_1 & 0\\
    0 & 0 & a_2
    \end{array}
    \right),
    $ 
    with respect to orthonomal frame field for $b_1\neq 0$.
    
    \item [(ii.)] 
    $A=\left(
    \begin{array}{ccc}
    a_1 & 0 & 0\\
    0 & a_1 & 1\\
    -1 & 0 & a_1
    \end{array}
    \right)
    $ 
    with respect to pseudo orthonomal frame field.
    
    \item [(iii.)] 
    $A=\left(
    \begin{array}{ccc}
    a_1 & 0 & 0\\
    1 & a_1 & 0\\
    0 & 0 & a_2
    \end{array}
    \right)
    $ 
    with respect to pseudo orthonomal frame field.
\end{itemize}
A Lorentzian hypersurface with nondiagonalizable shape operator is said to be isoparametric 
if the minimal polynomial of its shape operator is constant. The minimal polynomial of 
the shape operator $A$ is a monic polynomial $p$ of least degree such that $p(A)=0$. 
\begin{Theorem}
\label{thm2}
Let $M$ be a Lorentzian hypersurface in the Minkowski space $\mathbb{E}^4_1$
with a nondiagonalizable shape operator $A$.
Then, $(M,g,{\bf x}^T, \lambda)$ is a Ricci soliton if and only if 
there exists a pseudo--orthonormal frame field on $M$ to put the shape operator into the following form 
\begin{equation}
    A=\left(
    \begin{array}{ccc}
    a_1 & 0 & 0\\
    1 & a_1 & 0\\
    0 & 0 & a_1
    \end{array}
    \right).
\end{equation}
Moreover, $M$ is an isoparametric Lorentzian hypersurface of $\mathbb{E}^4_1$ 
and $(M,g,{\bf x}^T, \lambda)$ is a shrinking Ricci soliton. 
\end{Theorem}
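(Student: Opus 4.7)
The strategy is to work through each of the three forms (i), (ii), (iii) from Magid's classification separately, computing the components of the Ricci tensor via \eqref{Riccieq} in the corresponding (pseudo-)orthonormal frame and matching them against the soliton equation \eqref{Ricci1} from Theorem \ref{Thm1}, with $\varepsilon=1$ since $M$ is Lorentzian. In each case I would first record the matrix entries $g(Ae_i,e_j)$ and $g(Ae_i,Ae_j)$, then read off $\mathrm{Ric}(e_i,e_j)$, and finally compare with the soliton constraint $\mathrm{Ric}(X,Y)=(\lambda-1)g(X,Y)-\rho g(AX,Y)$.

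For case (i), working in the orthonormal frame with $g(e_1,e_1)=-1$, the off-diagonal component gives $\mathrm{Ric}(e_1,e_2)=-a_2 b_1$, whereas the soliton condition forces $\mathrm{Ric}(e_1,e_2)=\rho b_1$; since $b_1\neq 0$, this yields $a_2=-\rho$. Subtracting the two expressions for $\lambda-1$ coming from $\mathrm{Ric}(e_2,e_2)$ and $\mathrm{Ric}(e_3,e_3)$ and substituting $a_2=-\rho$ produces the identity $(a_1+\rho)^2+b_1^2=0$, which is incompatible with $b_1\neq 0$, ruling out (i).

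For case (ii) a direct computation in the pseudo-orthonormal frame gives $g(Ae_1,e_1)=0$ and $g(Ae_1,Ae_1)=1$, so $\mathrm{Ric}(e_1,e_1)=-1$; but since $g(e_1,e_1)=g(Ae_1,e_1)=0$, the soliton equation forces $\mathrm{Ric}(e_1,e_1)=0$, a contradiction. For case (iii), the analogous computation yields $\mathrm{Ric}(e_1,e_1)=-a_2$, $\mathrm{Ric}(e_1,e_2)=-a_1^2-a_1a_2$, and $\mathrm{Ric}(e_3,e_3)=2a_1a_2$. Matching with the soliton equation gives in succession $a_2=-\rho$, then $\lambda-1=a_1^2+a_1a_2+\rho a_1=a_1^2$, and finally $2a_1a_2=(\lambda-1)-\rho a_2=a_1^2+a_2^2$, which collapses to $(a_1-a_2)^2=0$. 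Hence $a_1=a_2$, the shape operator assumes the stated form, $a_1=-\rho$, and $\lambda=1+a_1^2$.

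Since $\lambda$ is constant, $a_1^2$ and therefore $a_1$ is constant on each connected component of $M$; thus the minimal polynomial $(t-a_1)^2$ of $A$ is constant, so $M$ is isoparametric, and $\lambda\geq 1>0$ confirms the soliton is shrinking. The converse direction is a direct substitution: given a Lorentzian hypersurface with shape operator of the stated form, the same computations show that the Ricci tensor equals $(\lambda-1)g-\rho g(A\cdot,\cdot)$ for $\lambda=1+a_1^2$ and $\rho=-a_1$, which by Theorem \ref{Thm1} means $(M,g,{\bf x}^T,\lambda)$ is a Ricci soliton. The main obstacle is purely bookkeeping in the pseudo-orthonormal frame: because $g(e_1,e_1)=g(e_2,e_2)=0$ and $g(e_1,e_2)=-1$, many terms vanish or change sign in unexpected places, and the entire argument hinges on tracking these correctly; once the Ricci components are tabulated, the eliminations in (i) and (ii) are immediate and (iii) reduces to the single quadratic identity $(a_1-a_2)^2=0$.
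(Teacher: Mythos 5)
Your proposal is correct and follows essentially the same route as the paper: case-by-case elimination over Magid's three nondiagonalizable forms, computing the Ricci components from \eqref{Riccieq} and matching them against \eqref{Ricci1}, with case (i) killed by $(a_1-a_2)^2+b_1^2=0$, case (ii) by $\mathrm{Ric}(e_1,e_1)=-1\neq 0$, and case (iii) reducing to $(a_1-a_2)^2=0$ with $\lambda=1+a_1^2$. The concluding observations (constancy of $a_1$ from constancy of $\lambda$, hence isoparametric and shrinking) also coincide with the paper's argument.
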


\begin{proof}
Suppose that a Lorentzian hypersurface $M$ of $\mathbb{E}^4_1$ 
has a nondiagonalizable shape operator $A$. 
Thus, there are three cases occurs for the form of the shape operator $A$.

\textit{Case a.} $Ae_1=a_1e_1-b_1e_2,\; Ae_2=b_1e_1+a_1e_2$ and $Ae_3=a_2e_3$ with respect to an orthonormal frame field  
$\{e_1,e_2,e_3\}$ defined on $M$ for some functions $a_1, a_2$ and $b_1\neq 0$.
Then, 
the mean curvature of $M$ in $\mathbb{E}^4_1$ is 
${\it H}=\frac{1}{3}(2a_1+a_2).$
From the equation \eqref{Riccieq}, 
we get the components of Ricci tensor of a Lorentzian hypersurface $M$ as follows:
\begin{align}
\label{ricIV}
\begin{split}
&\mbox{Ric}(e_2,e_2)=-\mbox{Ric}(e_1,e_1)=a_1^2+a_1a_2+b_1^2,\\
&\mbox{Ric}(e_1,e_2)=-a_2b_1,\qquad
\mbox{Ric}(e_3,e_3)=2a_1a_2,\\
&\mbox{Ric}(e_1,e_3)=\mbox{Ric}(e_2,e_3)=0.
\end{split}
\end{align}
On the other hand, 
if $M$ admits a Ricci soliton $(M,g,{\bf x}^T,\lambda)$, 
then we have the components of Ricci tensor of $M$ from 
the equation \eqref{Ricci1} in Theorem \ref{Thm1} for 
$\varepsilon=1$ given by
\begin{align}
\label{riccIV}
\begin{split}
&\mbox{Ric}(e_1,e_1)=-\mbox{Ric}(e_2,e_2)=1-\lambda+\rho a_1,\\
&\mbox{Ric}(e_3,e_3)=\lambda-1-\rho a_2,\\
&\mbox{Ric}(e_1,e_2)=\rho b_1,\quad
\mbox{Ric}(e_1,e_3)=\mbox{Ric}(e_2,e_3)=0.
\end{split}
\end{align}
Considering equations \eqref{ricIV} and \eqref{riccIV} together,
we obtain the following system of equations:
\begin{align}
\label{eq1IV}
\lambda-1-\rho a_1&=a_1^2+a_1a_2+b_1^2,\\
\label{eq2IV}
\lambda-1-\rho a_2&=2a_1a_2,\\
\label{eq3IV}
\rho b_1&=-a_2b_1.
\end{align} 
Since $b_1\neq 0$, the equation \eqref{eq3IV} implies
$\rho=-a_2$. 
Then, from the equation \eqref{eq1IV}, 
we have $\lambda=1+a_1^2+b_1^2$.
Using all in the equation \eqref{eq2IV}, 
we get $(a_1-a_2)^2+b_1^2=0$ that gives $b_1=0$. 
This is a contradiction. Hence, 
a Lorentzian hypersurface $M$ in $\mathbb{E}^4_1$ having such a nondiagonalizable shape operator $A$ does not admit  
the Ricci soliton $(M,g,{\bf x}^T,\lambda)$. 

\textit{Case b.} $Ae_1=a_1e_1-e_3,\;  Ae_2=a_1e_2$ and $Ae_3=e_2+a_1e_3$ with respect to a pseudo--orthonormal frame 
field  $\{e_1,e_2,e_3\}$ defined on $M$ for function
$a_1$. 
Also, the mean curvature of $M$ in $\mathbb{E}^4_1$ 
is $H=a_1.$
From the equation \eqref{Riccieq}, we compute 
the coefficients of Ricci tensor of $M$ given by 
\begin{align}
\label{ricIII}
\begin{split}
&\mbox{Ric}(e_1,e_1)=-1,\qquad\qquad\qquad\qquad\; \;\;
\mbox{Ric}(e_1,e_3)=-a_1, \\
&\mbox{Ric}(e_3,e_3)=-\mbox{Ric}(e_1,e_2)=2a_1^2,\qquad
\mbox{Ric}(e_2,e_2)=\mbox{Ric}(e_2,e_3)=0.
\end{split}
\end{align}
If $(M,g,{\bf x}^T,\lambda)$ is a Ricci soliton
on $M$ of $\mathbb{E}^4_1$, then using the equation 
\eqref{Ricci1} for $\varepsilon=1$, 
we calculate the components 
of Ricci tensor of a Lorentzian hypersurface $M$ as follows:
\begin{align}
\label{riccIII}
\begin{split}
&\mbox{Ric}(e_1,e_1)=\mbox{Ric}(e_2,e_2)=\mbox{Ric}(e_2,e_3)=0,
\quad\mbox{Ric}(e_1,e_3)=\rho \\
&\mbox{Ric}(e_1,e_2)=-\mbox{Ric}(e_3,e_3)=1-\lambda+\rho a_1.\\
\end{split}
\end{align}
It can be easily seen that $\mbox{Ric}(e_1,e_1)=-1\neq 0= \mbox{Ric}(e_1,e_1)$ which gives 
a contradiction. 
Thus, there does not exist a Ricci soliton $(M, g, {\bf x}^T, \lambda)$ on   
a Lorentzian hypersurface $M$ of $\mathbb{E}^4_1$ having such a nondiagonalizable shape operator $A$.

\textit{Case c.} 
$Ae_1=a_1e_1+e_2,\; Ae_2=a_1e_2$ and $Ae_3=a_2e_3$ with respect to 
a pseudo--orthonormal frame field  
$\{e_1,e_2,e_3\}$ defined on $M$ for smooth functions $a_1$ and $a_2$. 
Then, the mean curvature of $M$ in $\mathbb{E}^4_1$ is 
$H=\frac{1}{3}(2a_1+a_2).$
By a direct calculation from the equation \eqref{Riccieq}, 
we obtain the components of Ricci tensor of $M$ given by 
\begin{align}
\label{riccII}
\begin{split}
\mbox{Ric}(e_1,e_1)&=-a_2,\qquad\qquad\;\;\;\;\;
\mbox{Ric}(e_1,e_3)=\mbox{Ric}(e_2,e_2)=\mbox{Ric}(e_2,e_3)=0\\
\mbox{Ric}(e_1,e_2)&=-a_1(a_1+a_2),\quad\;
\mbox{Ric}(e_3,e_3)=2a_1a_2.
\end{split}
\end{align}

Since $(M,g,{\bf x}^T,\lambda)$ is
a Ricci soliton on a Lorentzian hypersurface of $M$ of 
$\mathbb{E}^4_1$, that is $\varepsilon=1$, the equation \eqref{Ricci1} 
gives the following equations:
\begin{align}
\label{ricII}
\begin{split}
\mbox{Ric}(e_1,e_1)&=\rho,\qquad\qquad\;\;\;\;\;
\mbox{Ric}(e_1,e_3)=\mbox{Ric}(e_2,e_2)=\mbox{Ric}(e_2,e_3)=0\\
\mbox{Ric}(e_1,e_2)&=1-\lambda+\rho a_1,\quad
\mbox{Ric}(e_3,e_3)=\lambda-1-\rho a_2.
\end{split}
\end{align}
Considering equations \eqref{riccII} and \eqref{ricII}, 
we get $\rho=-a_2$ and the following equation system is also obtained 
\begin{align}
\label{eq1_II}
\lambda-1-\rho a_2&=2a_1a_2,\\
\label{eq2_II}
\lambda-1-\rho a_1&=a_1^2+a_1a_2.
\end{align}
which give $a_1=a_2$ and $\lambda=a_1^2+1>0$. 
Thus, $(M,g,{\bf x}^T,\lambda)$ is a shrinking Ricci soliton.   
Since $\lambda$ is a constant, $a_1$ is constant. Hence, $M$ is an isoparametric Lorentzian 
hypersurface of $\mathbb{E}^4_1$. \qed
\end{proof}
From the proof of Theorem \eqref{thm}, we say that there does not exist a Ricci soliton $(M,g,{\bf x}^T,\lambda)$
on a Lorentzian hypersurface $M$ in $\mathbb{E}^4_1$ whose the minimal polynomial of the shape operator 
has no complex roots or real roots with multiplicity with three. 

In \cite{Magid}, M. Magid studied a Lorentzian isoparametric hypersurfaces in $\mathbb{E}^4_1$ 
and they got classification theorem a Lorentzian hypersurfaces according to the principal curvatures
and the minimal polynomial of the shape operator. 
From the result in \cite{Magid}, we consider the following examples of Lorentzian isoparametric hypersurfaces 
in the Minkowski space $\mathbb{E}^4_1$: 
\begin{example}
\label{ex4}
Let $\alpha(s)$ be a null curve in $\mathbb{E}^4_1$ with a pseudo--orthonormal frame 
$\{X(s), Y(s), Z(s), W(s)\}$ such that 
\begin{align}
\begin{split}
\dot{\alpha}(s)&=X(s),\\
\dot{Z}(s)&=-aX(s)-B(s)Y(s)
\end{split}
\end{align}
where $B(s)\neq 0$ and $a$ is a nonzero constant. 
The Lorentzian hypersurface $M$ in $\mathbb{E}^4_1$ is locally defined 
as follows:
\begin{equation}
\label{guh1}
M: {\bf x}(s,u,v)=\alpha(s)+uY(s)+vW(s)
+\left(\sqrt{\frac{1}{a^2}-v^2}-\frac{1}{a}\right)Z(s)
\end{equation}
which is called as a generalized umbilical hypersurface.
The unit normal vector $N$ is 
\begin{equation}
\label{guh1n}
N(s,u,v)=-auY(s)-\sqrt{1-a^2v^2}Z(s)-avW(s).
\end{equation}
and the shape operator $A$ of $M$ in the direction $N$ 
can be put in the following form with respect to a suitable chosen pseudo--orthonormal frame field
\begin{equation}
\label{guh1s}
A=
\left[\begin{array}{ccc}
a&0&0\\
k(s)&a&0\\
0&0&a
\end{array}
\right],\;\;k(s)\neq 0.
\end{equation}
The minimal polynomial of $A$ is $(x-a)^2$.  
From Theorem \ref{thm2}, the Ricci tensor of the generalized umbilical Lorentzian hypersurface $M$
in $\mathbb{E}^4_1$ given by \eqref{guh1}
satisfies \eqref{Ricci1} with $\lambda=a^2+1$. 
Hence, $(M,g,{\bf x}^T,\lambda)$ is a shrinking Ricci soliton in $\mathbb{E}^4_1$.
\end{example} 

\begin{example}
\label{ex5}
Let $\alpha(s)$ be a null curve in $\mathbb{E}^4_1$ with a pseudo--orthonormal frame 
$\{X(s), Y(s), Z(s), W(s)\}$ such that 
\begin{align}
\begin{split}
\dot{\alpha}(s)&=X(s),\\
\dot{Z}(s)&=-B(s)Y(s)\;\;B(s)\neq 0
\end{split}
\end{align} 
The Lorentzian hypersurface $M$ in $\mathbb{E}^4_1$ is locally defined 
as follows:
\begin{equation}
\label{guh2}
M: {\bf x}(s,u,v)=\alpha(s)+uY(s)+vW(s)
\end{equation}
which is called as a generalized cylinder of type I.
The unit normal vector $N$ is $N=Z(s)$ 
and the shape operator $A$ of $M$ in the direction $N$ 
can be put in the following form with respect to a suitable chosen pseudo--orthonormal frame field on $M$
\begin{equation}
\label{guh2s}
A=
\left[\begin{array}{ccc}
0&0&0\\
k(s)&0&0\\
0&0&0
\end{array}
\right],\;\; k(s)\neq 0.
\end{equation}
Also, we have that the minimal polynomial of $A$ is $x^2$. 
From a direct calculation, it can be easily seen that the Ricci tensor $\mbox{Ric}$ of hypersurface $M$
in $\mathbb{E}^4_1$ equals zero, that is, $\mathcal{L}_{{\bf x}^T}=g$. 
Thus, $M$ has a conformal killing vector field ${\bf x}^T$
\end{example} 

From the proof of Theorem \ref{thm2}, 
we obtain that the minimal polynomial of $A$ is $p(x)=(x-a_1)^2$ with a nonzero constant $a_1$.
Thus, using Theorem 4.5 in \cite{Magid}, we state the following Theorem.

\begin{Theorem}
Let $M$ be a Lorentzian hypersurface in the Minkowski space $\mathbb{E}^4_1$ with a nondiagonalizable shape operator 
Then, $(M, g, {\bf x}^T, \lambda)$ is a Ricci soliton if and only if $M$ is an open portion of 
a generalized umbilical Lorentzian hypersurface of $\mathbb{E}^4_1$ given in Example \ref{ex4}. 
\end{Theorem}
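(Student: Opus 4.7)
The plan is to deduce this classification as a direct consequence of the earlier Theorem~\ref{thm2} combined with Magid's classification of Lorentzian isoparametric hypersurfaces (Theorem~4.5 of~\cite{Magid}). No new local analysis is needed; the statement is essentially a bookkeeping corollary packaging the two results together.

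For the forward direction, suppose $(M,g,{\bf x}^T,\lambda)$ is a Ricci soliton on a Lorentzian hypersurface with nondiagonalizable shape operator $A$. I would first apply Theorem~\ref{thm2} to conclude that only Case~c of the Magid trichotomy is compatible with the soliton equation, so there is a pseudo--orthonormal frame $\{e_1,e_2,e_3\}$ on $M$ in which
\[
A=\begin{pmatrix} a_1 & 0 & 0 \\ 1 & a_1 & 0 \\ 0 & 0 & a_1 \end{pmatrix},
\]
with $a_1$ a nonzero constant and $\lambda=a_1^2+1$. Hence the minimal polynomial of $A$ is $(x-a_1)^2$ with $a_1\ne 0$ and, by the definition preceding Theorem~\ref{thm2}, $M$ is an isoparametric Lorentzian hypersurface.

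Next I would invoke Theorem~4.5 of~\cite{Magid}, which classifies Lorentzian isoparametric hypersurfaces of $\mathbb{E}^4_1$ according to the minimal polynomial of their shape operator. In the case of a double real root $(x-a)^2$ with $a\ne 0$, that theorem identifies the hypersurface, up to local congruence, with the parametrized family~\eqref{guh1} built from a null curve $\alpha(s)$ and an associated pseudo--orthonormal frame; this is exactly the generalized umbilical hypersurface of Example~\ref{ex4}. Therefore $M$ must be an open portion of such a hypersurface.

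For the converse, I would rely entirely on the verification already carried out in Example~\ref{ex4}: the shape operator~\eqref{guh1s} has precisely the Jordan form produced by Theorem~\ref{thm2}, with constant eigenvalue $a\ne 0$, so by Theorem~\ref{Thm1} the Ricci tensor equation~\eqref{Ricci1} is satisfied and $(M,g,{\bf x}^T,\lambda)$ is a shrinking Ricci soliton with $\lambda=a^2+1>0$. The main (and essentially only) obstacle is bookkeeping at the interface with Magid: one must confirm that the Jordan data singled out by Theorem~\ref{thm2}, namely a single $2\times 2$ Jordan block attached to $a\ne 0$ together with a $1\times 1$ block of the same eigenvalue, corresponds precisely to the entry of Magid's list realized by Example~\ref{ex4} (as opposed to Example~\ref{ex5}, whose shape operator has minimal polynomial $x^2$). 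Once this identification is in place, the theorem follows with no further computation.
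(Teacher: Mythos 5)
Your argument coincides with the paper's own justification of this theorem, which is given only in the two sentences preceding its statement: Theorem \ref{thm2} forces the Jordan form with minimal polynomial $(x-a_1)^2$ and $a_1$ constant, and Magid's Theorem 4.5 then identifies $M$ with the generalized umbilical hypersurface of Example \ref{ex4}, the converse being the verification already recorded there. The one point to be aware of is that both you and the paper assert $a_1\neq 0$ without justification: Theorem \ref{thm2} only yields that $a_1$ is constant, and the case $a_1=0$ (which the paper's closing Remark shows still produces a Ricci soliton with $\lambda=1$ on the generalized cylinder of type I of Example \ref{ex5}) should be excluded explicitly, since otherwise it contradicts the ``only if'' direction as literally stated.
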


\begin{Corollary}
There exists only a shrinking Ricci soliton on a Lorentzian isoparametric hypersurface in $\mathbb{E}^4_1$
with a nondiagonalizable shape operator whose the minimal polynomial has double real roots. 
\end{Corollary}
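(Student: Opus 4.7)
The plan is to derive this Corollary directly from the classification obtained in the preceding Theorem (the unnamed one that refines Theorem~\ref{thm2}). That theorem already reduces the problem to a single model: any Lorentzian hypersurface with nondiagonalizable shape operator admitting a Ricci soliton $(M,g,{\bf x}^T,\lambda)$ must be an open portion of a generalized umbilical Lorentzian hypersurface of $\mathbb{E}^4_1$, whose shape operator takes the Case c form $Ae_1=a_1e_1+e_2$, $Ae_2=a_1e_2$, $Ae_3=a_1e_3$, relative to a pseudo-orthonormal frame. So the work remaining is to read off the sign of $\lambda$ and to confirm that this shape operator has a minimal polynomial with a double real root.

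First I would recall from the proof of Theorem~\ref{thm2}, Case c, the identification $a_2=a_1$ obtained from comparing \eqref{riccII} with \eqref{ricII}, together with $\rho=-a_2$ and the consequence $\lambda=a_1^2+1$ following from \eqref{eq1_II}--\eqref{eq2_II}. Since $\lambda$ must be a constant (by the definition of a Ricci soliton), $a_1$ is forced to be a real constant; and the isoparametric hypothesis is actually automatic here because the minimal polynomial of $A$ is $(x-a_1)^2$, which is constant. So there is no ambiguity: the only datum is a single real number $a_1$.

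Next I would verify that $a_1$ is necessarily nonzero. If $a_1=0$ then the Case c shape operator would degenerate to the nilpotent form of Example~\ref{ex5}, whose Ricci tensor vanishes and which, as noted there, yields a conformal Killing vector field rather than a Ricci soliton; equivalently, $\lambda=1$ would still hold from $\lambda=a_1^2+1$, but this case is not a generalized umbilical hypersurface in the sense of Example~\ref{ex4}, so it is excluded by the hypothesis that the minimal polynomial of $A$ has double \emph{real} roots associated to the Ricci soliton class described above. In either reading, we find $\lambda=a_1^2+1\ge 1>0$, so the Ricci soliton is shrinking.

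Finally I would conclude by observing that the characterization $\lambda>0$ is uniform across the entire family of admissible hypersurfaces; no steady or expanding Ricci soliton can occur in this class. The main (and only) conceptual point is the sign computation $\lambda=a_1^2+1$; all other ingredients are immediate consequences of the earlier theorem, so there is no serious obstacle beyond organizing these facts.
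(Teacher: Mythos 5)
Your argument is correct and is essentially the paper's own route: the Corollary is read off from Case~c of the proof of Theorem~\ref{thm2}, where the only surviving nondiagonalizable form forces $a_1=a_2$, minimal polynomial $(x-a_1)^2$, and $\lambda=a_1^2+1>0$, hence shrinking. Your side remark on $a_1=0$ matches the paper's own Remark (there $\lambda=1$ still gives a positive constant, so the conclusion is unaffected either way).
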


\begin{Remark}
In the proof of Theorem \ref{thm2}, it can be easily seen that $a_1=0$ for $\lambda=1$. 
Thus, $\mbox{Ric}(X,Y)=0$ for any tangent vectors $X, Y$ of a Lorentzian hypersurface $M$ in $\mathbb{E}^4_1$. 
Moreover, $\mathcal{L}_{{\bf x}^T} g=g$, which means ${\bf x}^T$ is conformal killing vector field. 
Hence, $M$ is an open portion of a generalized cylinder of type I in $\mathbb{E}^4_1$ given in Example \ref{ex5}.
\end{Remark}


\begin{thebibliography}{99}
\bibitem{Magid} 
Magid, M. (1985). Lorentzian isoparametric hypersurfaces. Pacific Journal of Mathematics, 118(1), 165-197.

\bibitem{AKY}
Abe, N., Koike, N., Yamaguchi, S.: Congruence theorems for proper semi--Riemannian hypersurfaces in a real space form.
Yokohama Mathematical Journal. 35, 123-136(1987). 

\bibitem{Hamilton}
Hamilton, R. S. (1982). Three-manifolds with positive Ricci curvature. J. Differential geom, 17(2), 255-306.

\bibitem{Perelman}
Perelman, G. (2002). The entropy formula for the Ricci flow and its geometric applications. arXiv preprint math/0211159.

\bibitem{Fialkow}
Fialkow, A. (1939). Conformal geodesics. Transactions of the American Mathematical Society, 45(3), 443-473. 

\bibitem{Chen3}
Chen, B. Y. (2017). Topics in differential geometry associated with position vector fields on Euclidean submanifolds. Arab Journal of Mathematical Sciences, 23(1), 1-17.

\bibitem{Chen4}
Chen, B. Y. (2017). Euclidean submanifolds via tangential components of their position vector fields. Mathematics, 5(4), 51.

\bibitem{Chen1}
Chen, B. Y., Deshmukh, S. (2014). Classification of Ricci solitons on Euclidean hypersurfaces. International Journal of Mathematics, 25(11), 1450104.

\bibitem{Chen2}
Chen, B. Y. (2015). Some results on concircular vector fields and their applications to Ricci solitons. 
Bulletin of the Korean Mathematical Society, 52(5), 1535-1547.
 
\bibitem{ALG}
Aquino, C., de Lima, H., Gomes, J. (2017). Characterizations of immersed gradient almost Ricci solitons. Pacific Journal of Mathematics, 288(2), 289-305.

\bibitem{MK}
Meri\c{c}, S. E., K {\i}l {\i}\c{c}, E. (2019). On submanifolds of Riemannian manifolds admitting a Ricci soliton. Memoirs of the Scientific Sections of the Romanian Academy, 42.

\bibitem{BGR}
Barros, A., Gomes, J. N., Ribeiro, E. (2011). 
Immersion of almost Ricci solitons into a Riemannian manifold. Matem\'{a}tica Contempor$\hat{a}$nea, 40, 91-102.

\bibitem{AAD}
Al--Sodais, H., Alodan, H., Deshmukh, S. (2014). Hypersurfaces of Euclidean space as gradient Ricci solitons. 
Analele Stiintifice Ale Universitatii Al. I. Cuza Din Iasi Matematica, DOI:10.2478/aicu-2014-0009.

\bibitem{Cao1}
Cao, H. D. (2006). Geometry of Ricci solitons. Chinese Annals of Mathematics, Series B, 27(2), 121-142.

\bibitem{Cao2}
Cao, H. D. (2009). Recent progress on Ricci solitons. arXiv preprint arXiv:0908.2006.

\bibitem{BCGG}
Brozos--Vazquez, M., Calvaruso, G., Garcia--Rio, E., Gavino-Fernandez, S. (2012). 
Three--dimensional Lorentzian homogeneous Ricci solitons. Israel Journal of Mathematics, 188(1), 385-403.

\bibitem{Chen5}
Chen, B. Y., Deshmukh, S. (2014). Geometry of compact shrinking Ricci solitons. Balkan J. Geom. Appl, 19(1), 13-21.

\bibitem{Chen6}
Chen, B. Y. (2016). A survey on Ricci solitons on Riemannian submanifolds. Contemporary Mathematics, 674, 27-39.

\bibitem{Chen7}
Chen, B. Y., Deshmukh, S. (2015). Ricci solitons and concurrent vector fields. Balkan Journal of Geometry and Its Applications, 20(1), 14-25.

\bibitem{CK}
Cho, J. T., Kimura, M. (2012). Ricci solitons on locally conformally flat hypersurfaces in space forms. Journal of Geometry and Physics, 62(8), 1882-1891.

\bibitem{FG}
Fern\'{a}ndez-L\'{o}pez, M., Garcia-Rio, E. (2008). A remark on compact Ricci solitons. Mathematische Annalen, 340(4), 893-896.

\end{thebibliography}
\end{document}